\newtheorem{thm}{Theorem}
\newtheorem{lem}[thm]{Lemma}
\newtheorem{ex}[thm]{Example}
\newtheorem{dfn}[thm]{Definition}
\newtheorem{remark}[thm]{Remark}
\numberwithin{equation}{section}
\newdimen\Squaresize \Squaresize=14pt
\newdimen\Thickness \Thickness=0.4pt
\def\Square#1{\hbox{\vrule width \Thickness
   \vbox to \Squaresize{\hrule height \Thickness\vss
      \hbox to \Squaresize{\hss#1\hss}
   \vss\hrule height\Thickness}
\unskip\vrule width \Thickness} \kern-\Thickness}
\def\Vsquare#1{\vbox{\Square{$#1$}}\kern-\Thickness}
\title{Correlations of minimal forbidden factors of the Fibonacci word}
\author{Narad Rampersad and Max Wiebe\footnote{
Department of Math/Stats,
University of Winnipeg,
515 Portage Ave.,
Winnipeg, MB, R3B 2E9
Canada; {\tt narad.rampersad@gmail.com}.}}
\begin{document}
\maketitle
\begin{abstract}
If $u$ and $v$ are two words,
the correlation of $u$ over $v$ is a binary word that encodes all possible overlaps
between $u$ and $v$.  This concept was introduced by Guibas and Odlyzko as a key element
of their method for enumerating the number of words of length $n$ over a given alphabet
that avoid a given set of forbidden factors.  In this paper we characterize the pairwise
correlations between the minimal forbidden factors of the infinite Fibonacci word.
\end{abstract}

\section{Introduction}
The infinite Fibonacci word
$${\bf f} = 010010100100101001010010\cdots$$
is one of the most well-studied infinite words.  One property of an infinite word ${\bf x}$ that is often
analyzed is the set of its forbidden factors, i.e., the set of factors that \emph{do not appear}
in the word.  In this analysis, one can restrict oneself to the minimal forbidden factors, i.e.,
the forbidden factors $u$ with the property that no proper factor of $u$ is forbidden in ${\bf x}$.
The paper of Mignosi, Restivo, and Sciortino~\cite{MRS02} is a good introduction to this topic.
In particular, it includes a characterization of the set $M$ of minimal forbidden factors of the Fibonacci word:
$$M = \{11, 000, 10101, 00100100, 1010010100101, \ldots\}.$$

When studying the avoidance of words, one often wishes to enumerate the number of words of length $n$
over a given alphabet that avoid a finite set of words $S$.  Guibas and Odlyzko~\cite{GO81} gave a
generating function method based on \emph{correlations}:  if $u$ and $v$ are two words, the correlation
of $u$ over $v$ is a binary word that encodes all possible overlaps between $u$ and $v$.  The main result of
this paper is a characterization of the pairwise correlations between the minimal forbidden factors of the
Fibonacci word.

\section{Background information}\label{sec1}
The sequence of Fibonacci numbers is defined by $F_1 = F_2 = 1$, and $F_n = F_{n-1} + F_{n-2}$, for every $n>2$. The first few values of the sequence $F_n$ are given in Table~\ref{tab1}.
\begin{table}[H]
    \centering
    \begin{tabular}{c|c|c|c|c|c|c|c|c|c|c|c|c}
        $n$ & 1 & 2 & 3 & 4 & 5 & 6 & 7 & 8 & 9 & 10 & 11 & 12 \\
        \hline
        $F_n$ & 1 & 1 & 2 & 3 & 5 & 8 & 13 & 21 & 34 & 55 & 89 & 144 
    \end{tabular}
    \caption{The first few values of the sequence of Fibonacci numbers.}
    \label{tab1}
\end{table}
Let $A$ be a finite alphabet and $A^*$ be the set of all finite words over $A$, including the empty word $\epsilon$, and let $A^n$ be the set of all words over $A$ of length $n$. For notation purposes, we will use [$n$] = $\{0,1,\cdots,n-1\}$. Let $L \subseteq A^*$ be a $\textit{factorial language}$, i.e., a language satisfying:
\begin{align*}
    \forall u,v \in A^*,\; uv\in L \implies u, v \in L
\end{align*}
A word $v\in A^*$ is \emph{forbidden} for the factorial language $L$ if $v\not\in L$, which is equivalent to saying that $v$ occurs in no word of $L$. In addition, $v$ is \emph{minimal} if it has no proper factor that is forbidden.
Let $M(L)$ denote the set of \emph{minimal forbidden words} for $L$. 
\begin{remark}\label{Rem1}
    A word of length n, $v = v[0,1,...,n-1]$ belongs to $M(L)$ if and only if two conditions hold:
    \begin{itemize}
        \item $v\not\in L$, (i.e. $v$ is forbidden),
        \item $v[0,1,\cdots,n-2]$ and $v[1,2,\cdots,n-1] \in L$ (i.e. the prefix and suffix of $v$ of length $n-1$ belongs to $L$)
    \end{itemize}
\end{remark}
The  $\textit{Fibonacci infinite word}$ $\textbf{f}$ over the binary alphabet $\{0,1\}$ is the fixed point $\phi^\omega(0)$ of the Fibonacci morphism $\phi$, defined by
\begin{equation}\label{phi}
    \phi(0) = 01, \;\; \phi(1) = 0
\end{equation}
Let us also define the $n^{th} \textit{Fibonacci word}$, $f_n$, defined by $f_1 = 1$, $f_2 = 0$,  \\
\begin{equation}\label{fn}
    f_{n+2} = f_{n+1}f_n, \;\; n\geq1
\end{equation}
or equivalently, 
\begin{equation}\label{fn:phi}
    f_{n+2} = \phi^n(0), \;\;\text{and}\;\; f_{n+1} = \phi(f_n), \;\; n\geq1
\end{equation}
The Fibonacci words also have the property that the length of $f_n$, denoted $|f_n|$, is $|f_n| = F_n$ for each $n$. We record the first few Fibonacci words in Table \ref{tab2}.
\begin{table}[H]
    \centering
    \begin{tabular}{l}
         $f_1$ = 1 \\
         $f_2$ = 0 \\
         $f_3$ = 01 \\
         $f_4$ = 010 \\
         $f_5$ = 01001 \\
         $f_6$ = 01001010 \\
         $f_7$ = 0100101001001 \\
         $f_8$ = 010010100100101001010 \\
         $f_9$ = 0100101001001010010100100101001001
    \end{tabular}
    \caption{The first few Fibonacci words.}
    \label{tab2}
\end{table}



The $\textit{reverse}$ of a word $v = v[0,1,\cdots,n-1]$ is the word $\Tilde{v} = v[n-1,n-2,\cdots,0]$. If a word $v$ equals its reverse, $v$ is a $\textit{palindrome}$. If $v = xb = by$, we say $b$ is a $\textit{border}$ of $v$, or that $b$ $\textit{borders}$ $v$. For example, the word $v = 01001010010$ has borders $0$, $010$, $010010$, and $v$. Note that borders may overlap themselves and that every word borders itself.

Let us consider the sequence $p_n$ of the $\textit{palindromic prefixes}$ of $\textbf{f}$. The first few values of the sequence $p_n$ are recorded in Table~\ref{tab3}.
\begin{table}[H]
    \centering
    \begin{tabular}{l}
         $p_3$ = $\epsilon$ \\
         $p_4$ = 0 \\
         $p_5$ = 010 \\
         $p_6$ = 010010 \\
         $p_7$ = 01001010010 \\
         $p_8$ = 0100101001001010010 \\
         $p_9$ = 01001010010010100101001001010010
    \end{tabular}
    \caption{The first few palindromic prefixes $p_n$ of $\textbf{f}$.}
    \label{tab3}
\end{table}
For every $n\geq3$, $p_n$ is obtained from $f_n$ by removing the last two symbols. More specifically, for every $n\geq1$, 
\begin{equation}\label{fn:pn}
    f_{2n+1} = p_{2n+1}01, \quad f_{2n+2} = p_{2n+2}10
\end{equation}

\begin{lem}\label{pn:phi}
For $n \geq 3$, we have $\phi(p_n)0 = p_{n+1}$.
\end{lem}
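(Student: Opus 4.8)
The plan is to deduce the identity directly from the recurrence $f_{n+1} = \phi(f_n)$ recorded in \eqref{fn:phi}, combined with the explicit two-symbol suffix decompositions of the Fibonacci words in \eqref{fn:pn}. Because those decompositions differ according to the parity of the index (ending in $01$ for odd indices and in $10$ for even indices), I would split the argument into two cases depending on whether $n$ is even or odd. In each case the idea is the same: write $f_n$ in terms of $p_n$ using \eqref{fn:pn}, apply the morphism $\phi$, simplify using $\phi(0) = 01$ and $\phi(1) = 0$, and then match the result against the decomposition of $f_{n+1}$ to strip off the known two-letter suffix and isolate $p_{n+1}$.

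Concretely, for odd $n = 2k+1$ with $k \geq 1$, equation \eqref{fn:pn} gives $f_n = p_n\,01$, so that $f_{n+1} = \phi(p_n)\,\phi(0)\,\phi(1) = \phi(p_n)\,010$; since $n+1$ is even, \eqref{fn:pn} also gives $f_{n+1} = p_{n+1}\,10$, and cancelling the common suffix $10$ yields $p_{n+1} = \phi(p_n)\,0$. For even $n = 2k$ with $k \geq 2$, equation \eqref{fn:pn} gives $f_n = p_n\,10$, so that $f_{n+1} = \phi(p_n)\,\phi(1)\,\phi(0) = \phi(p_n)\,001$; since $n+1$ is odd, \eqref{fn:pn} gives $f_{n+1} = p_{n+1}\,01$, and cancelling the common suffix $01$ again yields $p_{n+1} = \phi(p_n)\,0$. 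Together the two cases cover every $n \geq 3$ (the odd case handling $n = 3, 5, 7, \ldots$ and the even case $n = 4, 6, 8, \ldots$), and one can check the smallest instance $\phi(p_3)0 = \phi(\epsilon)0 = 0 = p_4$ as a sanity check.

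I do not expect a serious obstacle here; the argument is essentially a bookkeeping computation. The only point that requires care is the behaviour of $\phi$ on the two-letter suffixes: the crucial facts are that $\phi(01) = 010$ and $\phi(10) = 001$, so $\phi$ sends each two-letter suffix to a three-letter word of the form $0$ followed by the \emph{opposite} two-letter suffix. This parity-swapping behaviour is precisely what makes the suffix cancellation go through and leaves behind the single $0$ that the statement predicts, so the main thing to watch is keeping the index ranges and parity cases aligned with \eqref{fn:pn}.
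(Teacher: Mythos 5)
Your proof is correct and follows essentially the same route as the paper's: both split on the parity of $n$, express $f_n$ in terms of $p_n$ via \eqref{fn:pn}, apply $\phi$ using $f_{n+1}=\phi(f_n)$, and cancel the two-letter suffix of $f_{n+1}$ to isolate $p_{n+1}$. The only difference is cosmetic — the paper writes the cancellation with inverse notation such as $f_{2m+1}(01)^{-1}$ while you match suffixes directly.
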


\begin{proof}
If $n$ is odd, write $n=2m+1$.  Using \eqref{fn:pn}, we have
\begin{align*}
\phi(p_{2m+1})0&=\phi(f_{2m+1}(01)^{-1})0\\
&=\phi(f_{2m+1})(010)^{-1}0\\
&=f_{2m+2}(10)^{-1}\\
&=p_{2m+2}10(10)^{-1}\\
&=p_{2m+2}.
\end{align*}

If $n$ is even, write $n=2m+2$.  Using \eqref{fn:pn}, we have
\begin{align*}
\phi(p_{2m+2})0&=\phi(f_{2m+2}(10)^{-1})0\\
&=\phi(f_{2m+2})(001)^{-1}0\\
&=f_{2m+3}(01)^{-1}\\
&=p_{2m+3}01(01)^{-1}\\
&=p_{2m+3}.
\end{align*}
\end{proof}

The fundamental property of the palindromic prefixes of $\textbf{f}$ is the following:
\begin{lem}\label{lem1} For $n\geq1$,
    \begin{align*}
        p_{2n+1} = p_{2n-1}01p_{2n} = p_{2n}10p_{2n-1}, \;\;\;\;\;\; p_{2n+2} = p_{2n}10p_{2n+1} = p_{2n+1}01p_{2n}
    \end{align*}
\end{lem}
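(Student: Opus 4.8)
The plan is to exploit the fact that the four identities form two mirror-image pairs, so that it suffices to prove one representative from each pair and then reverse. The two ``easy'' representatives, $p_{2n+1}=p_{2n}10\,p_{2n-1}$ and $p_{2n+2}=p_{2n+1}01\,p_{2n}$, fall directly out of the defining recursion \eqref{fn} once each Fibonacci word appearing in it is replaced by its decomposition from \eqref{fn:pn}; the other two then follow by applying the reversal operation $w\mapsto\widetilde w$ and using that every $p_k$ is a palindrome.

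Concretely, I would first take the instance $f_{2n+1}=f_{2n}f_{2n-1}$ of \eqref{fn} and substitute $f_{2n+1}=p_{2n+1}01$, $f_{2n}=p_{2n}10$, and $f_{2n-1}=p_{2n-1}01$ from \eqref{fn:pn}. This gives $p_{2n+1}01=p_{2n}10\,p_{2n-1}01$, and right-cancelling the common suffix $01$ yields $p_{2n+1}=p_{2n}10\,p_{2n-1}$. The completely parallel computation starting from $f_{2n+2}=f_{2n+1}f_{2n}$, substituting the corresponding decompositions and right-cancelling the suffix $10$, gives $p_{2n+2}=p_{2n+1}01\,p_{2n}$.

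To obtain the remaining two identities I would reverse. The map $w\mapsto\widetilde w$ reverses products, sends $10\mapsto 01$ and $01\mapsto 10$, and fixes each $p_k$ because the $p_k$ are palindromes. Hence reversing $p_{2n+1}=p_{2n}10\,p_{2n-1}$ gives $p_{2n+1}=\widetilde{p_{2n+1}}=\widetilde{p_{2n-1}}\,01\,\widetilde{p_{2n}}=p_{2n-1}01\,p_{2n}$, and reversing $p_{2n+2}=p_{2n+1}01\,p_{2n}$ gives $p_{2n+2}=\widetilde{p_{2n}}\,10\,\widetilde{p_{2n+1}}=p_{2n}10\,p_{2n+1}$, completing all four.

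There is no serious obstacle here: the argument is a direct substitution rather than an induction, so no elaborate base case is required. The only points demanding care are bookkeeping ones. First, the substitution from \eqref{fn:pn} requires every Fibonacci word involved to have length at least $F_3$, which forces $n\geq 2$; for $n=1$ the symbols $p_1,p_2$ lie outside the list of palindromic prefixes, which begins at $p_3$ (indeed $|p_k|=F_k-2$ is negative for $k\le 2$), so the statement should be read for $n\ge 2$, and the smallest genuine case $n=2$ can be checked against Table~\ref{tab3}. Second, the reversal step relies essentially on the palindromicity of the $p_k$, which is exactly the property singled out in their definition as the palindromic prefixes of $\mathbf{f}$.
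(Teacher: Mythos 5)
Your proof is correct and takes essentially the same route as the paper's one-line argument: substitute the decompositions \eqref{fn:pn} into $f_{n+2}=f_{n+1}f_n$ and cancel the common two-letter suffix, with the two mirror identities then obtained from the palindromicity of the $p_k$ (a detail the paper leaves implicit). Your remark that the substitution only makes sense for $n\geq 2$, since $p_1$ and $p_2$ are never defined (indeed $|p_k|=F_k-2<0$ there), is a fair catch of a small index slip in the lemma's stated range.
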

\begin{proof}
    Follows immediately from \eqref{fn:pn} and the fact that $f_n = f_{n-1}f_{n-2}$ for $n\geq1$.
\end{proof}
Let $M$ be the set of minimal forbidden Fibonacci words. The following description of $M$ can
be found in \cite{MRS02}:
\begin{equation}\label{M}
    M = \{\;M_{2n+1}\;|\;M_{2n+1}=1p_{2n+1}1, \; n\geq1\}\cup\{\;M_{2n+2}\;|\;M_{2n+2}=0p_{2n+2}0, \; n\geq1\}
\end{equation}
The first few values of the sequence $M_n$ are recorded in Table~\ref{tab4}.
\begin{table}[H]
    \centering
    \begin{tabular}{l}
        $M_3$ = 11 \\
        $M_4$ = 000 \\
        $M_5$ = 10101 \\
        $M_6$ = 00100100 \\
        $M_7$ = 1010010100101 \\
        $M_8$ = 001001010010010100100 \\
        $M_9$ = 1010010100100101001010010010100101
    \end{tabular}
    \caption{The first few minimal forbidden Fibonacci words $M_n$.}
    \label{tab4}
\end{table}
Note that $|f_n| = F_n$ implies $|p_n| = F_n - 2$, so by construction, the $n^{th}$ minimal forbidden Fibonacci word $M_n$ has $|M_n| = F_n$. Furthermore, as each $p_n$ is a palindrome, so is each $M_n$.

Guibas and Odlyzko~\cite{GO81} introduced the notion of the \emph{correlation} of two words.


\begin{dfn}\label{correlation}
    For every pair of words $(u,v)\in A^n\times A^m$, the correlation of $u$ over $v$ is the word $C_{u,v}\in A^n$ such that for all $k\in [n]$, 
    \begin{align*} 
    C_{u,v}[k] =
    \begin{cases} 
        1 & \text{if }\; \forall i\in[n],\; j\in[m],\; \text{with } i = j + k, \\ &u[i] = v[j], \\
        0 & \text{otherwise.}
    \end{cases}
\end{align*}
\end{dfn}
\begin{table}[H]
    \centering
    \begin{tabular}{c||c|c|c|c|c|c|c|c|c|c|c|c||c}
        pos. & 0 & 1 & 2 & 3 & 4 & 5 & 6 & 7 & 8 & 9 & 10 & 11 & \\
        \hline
        $u$ & 1 & 0 & 1 & 0 & 0 & 1 & 0 & 1 & - & - & - & - & $C_{u,v}$\\
        \hline 
        $v$ & 1 & 0 & 1 & 0 & 1 & - & - & - & - & - & - & - & 0\\
          & - & 1 & 0 & 1 & 0 & 1 & - & - & - & - & - & - & 0\\
          & - & - & 1 & 0 & 1 & 0 & 1 & - & - & - & - & - & 0\\
          & - & - & - & 1 & 0 & 1 & 0 & 1 & - & - & - & - & 0\\
          & - & - & - & - & 1 & 0 & 1 & 0 & 1 & - & - & - & 0\\
          & - & - & - & - & - & 1 & 0 & 1 & 0 & 1 & - & - & 1\\
          & - & - & - & - & - & - & 1 & 0 & 1 & 0 & 1 & - & 0\\
          & - & - & - & - & - & - & - & 1 & 0 & 1 & 0 & 1 & 1\\
    \end{tabular}
    \caption{The correlation of $u=10100101$ over $v=10101$, where the $(i+3)^{th}$ row is the $i^{th}$ shift of $v$, and the $i^{th}$ letter of $C_{u,v}$ is given in the $(i+3)^{th}$ entry of the last column.}
    \label{tab5}
\end{table}
We would like to interpret the correlations $C_{u,v}$ as a polynomial over the variable $z$, which we call the $\textit{correlation polynomial}$ of $u$ over $v$, and denote it as $C_{u,v}(z)$. If $|u|=n$, then
\[
C_{u,v}(z) = \sum_{k=0}^{n-1} C_{u,v}[k]z^{n-1-k}.
\]
Thus from the example given in Table~$\ref{tab5}$, we get the polynomial $C_{u,v}(z) = 1 + z^2$.

\section{Results}
The goal of this section is to characterize the correlations between any two minimal forbidden Fibonacci words.
\begin{lem}\label{lem2}
Let $u\in A^n$, $v\in A^m$, with $n\leq m$. Then $C_{u,v}[k]=1$ if and only if $u[k,k+1,\cdots,n-1] = v[0,1,\cdots,n-k-1]$; i.e. the overlapping blocks are the suffix of $u$ and the prefix of $v$ of length $n-k$.
\end{lem}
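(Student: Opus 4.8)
The plan is to unwind Definition~\ref{correlation} directly and track the ranges of the indices $i$ and $j$, using the hypothesis $n \le m$ to pin down the effective range. By definition, $C_{u,v}[k]=1$ holds precisely when $u[i]=v[j]$ for every pair $(i,j)$ with $i\in[n]$, $j\in[m]$, and $i=j+k$. So the entire argument reduces to determining which pairs $(i,j)$ satisfy these three constraints simultaneously, and then reindexing.

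First I would substitute $j = i-k$ and determine the admissible values of $i$. The condition $j \ge 0$ forces $i \ge k$, while $i\in[n]$ forces $i \le n-1$; the remaining condition $j \le m-1$ becomes $i \le m-1+k$. This is where the hypothesis $n \le m$ enters: since $k \ge 0$ we have $n-1 \le m-1 \le m-1+k$, so the bound $i \le n-1$ is always the binding one and the constraint $j \le m-1$ is automatically satisfied. Hence the admissible pairs are exactly those of the form $(i,\,i-k)$ with $k \le i \le n-1$, and therefore $C_{u,v}[k]=1$ if and only if $u[i]=v[i-k]$ for every $i$ in $\{k, k+1, \ldots, n-1\}$.

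Finally I would reindex by setting $\ell = i-k$, so that $\ell$ ranges over $\{0,1,\ldots,n-1-k\}$ and the equality condition reads $u[k+\ell]=v[\ell]$ for all such $\ell$. This is precisely the assertion that the length-$(n-k)$ suffix $u[k, k+1, \ldots, n-1]$ of $u$ agrees symbol by symbol with the length-$(n-k)$ prefix $v[0,1,\ldots,n-k-1]$ of $v$, which is the claimed equivalence.

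The only real subtlety, and the step I would be most careful to flag, is the role of the hypothesis $n \le m$: it is exactly what guarantees that the prefix $v[0,\ldots,n-k-1]$ of the stated length genuinely exists inside $v$ (equivalently, that no admissible pair is lost because $j$ would exceed $m-1$). Without it the admissible range of $i$ could be truncated at $m-1+k$ rather than $n-1$, and the overlap would then be described by an interior block of $u$ rather than by a suffix; so I would make the use of $n \le m$ explicit rather than leaving it implicit in the index bookkeeping.
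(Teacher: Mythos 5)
Your proposal is correct and follows essentially the same route as the paper: both arguments identify the admissible index pairs as $(i,i-k)$ with $k\le i\le n-1$ and read off the suffix--prefix equality. Your version is slightly more explicit about where the hypothesis $n\le m$ enters (to make the constraint $j\le m-1$ automatic), which the paper leaves implicit, but the substance is identical.
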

\begin{proof}
    Consider $k\in[n]$ such that $C_{u,v}[k]=1$. Then $i=k$ is the smallest $i\in[n]$ that could satisfy $i=j+k$,  as $i<k$ and $i=j+k$ implies $j<0$, which can't happen. Then for every $k\leq r\leq n-1$, we have that $i=r$ and $i=j+k$ implies $j=r-k$, so $u[r]=v[r-k]$. So it must be that the blocks $u[k,k+1,\ldots,n-1]$ and $v[0,1,\ldots,n-k-1]$ are equal, which are the suffix of $u$ and the prefix of $v$ of length $n-k$, respectively.
\end{proof}
\begin{lem}\label{lem3}
    Let $u\in A^n$, $v\in A^m$, with $n>m$, and $v$ is not a subword of $u$. Then $C_{u,v}[k]=1$ if and only if $k>n-m$ and $u[k,k+1,\ldots,n-1] = v[0,1,\ldots,n-k-1]$; i.e. the overlapping blocks are the suffix of $u$ and the prefix of $v$ of length $n-k$.
\end{lem}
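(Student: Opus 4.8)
The plan is to reduce Lemma~\ref{lem3} to Lemma~\ref{lem2} by handling the two directions separately, with the main work going into showing that no ``long'' overlap (one where $v$ would have to stick out past the right end of $u$) can occur once we assume $v$ is not a factor of $u$. Recall that in Definition~\ref{correlation}, the condition $C_{u,v}[k]=1$ asks that $u[i]=v[j]$ for \emph{every} pair $(i,j)\in[n]\times[m]$ with $i=j+k$. The key structural difference from Lemma~\ref{lem2} is that here $m<n$, so for small $k$ the shift of $v$ may fit entirely inside $u$; the hypothesis that $v$ is not a subword of $u$ is precisely what is needed to rule those shifts out.

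First I would fix $k\in[n]$ with $C_{u,v}[k]=1$ and run the same index analysis as in the proof of Lemma~\ref{lem2}: the smallest valid $i$ is $i=k$ (since $i<k$ forces $j<0$), and for each $i$ in range with $i=j+k$ we get $u[i]=v[i-k]$. The new point is to determine how far $i$ ranges. The pairs $(i,j)$ with $i=j+k$ that lie in $[n]\times[m]$ are exactly those with $k\le i\le n-1$ and $0\le i-k\le m-1$, i.e. $k\le i\le \min(n-1,\,k+m-1)$. I would split on which term achieves the minimum.

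The crux is the case $k\le n-m$, where $k+m-1\le n-1$, so the constraint runs over the full block $0\le j\le m-1$. In that situation $C_{u,v}[k]=1$ forces $u[k+j]=v[j]$ for all $j\in[m]$, i.e. $u[k,k+1,\ldots,k+m-1]=v$, which exhibits $v$ as a factor of $u$ occurring at position $k$. This contradicts the standing hypothesis that $v$ is not a subword of $u$. Hence no such $k$ can satisfy $C_{u,v}[k]=1$, which proves that $C_{u,v}[k]=1$ implies $k>n-m$. The hard part here is nothing deep but rather being careful that the boundary case $k=n-m$ is correctly classified as a full embedding of $v$ (a factor), not a genuine suffix/prefix overlap, so that the strict inequality $k>n-m$ is exactly right.

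For the remaining range $k>n-m$ we have $\min(n-1,\,k+m-1)=n-1$, so the valid indices are precisely $k\le i\le n-1$, the constraint reads $u[i]=v[i-k]$ for those $i$, and substituting $r=i$ gives $u[k,k+1,\ldots,n-1]=v[0,1,\ldots,n-k-1]$ exactly as in Lemma~\ref{lem2}; this is a genuine suffix-prefix overlap of length $n-k<m$. Conversely, if $k>n-m$ and $u[k,\ldots,n-1]=v[0,\ldots,n-k-1]$, then every pair $(i,j)$ with $i=j+k$ in $[n]\times[m]$ has $i\le n-1$ and therefore $j=i-k\le n-1-k$, so the matching condition is satisfied on the whole overlap and $C_{u,v}[k]=1$. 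Assembling the two directions gives the stated equivalence, and the only subtlety worth flagging in the writeup is the interplay between the ``$v$ not a subword of $u$'' hypothesis and the threshold $n-m$.
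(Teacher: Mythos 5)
Your proposal is correct and follows essentially the same route as the paper: rule out $k\le n-m$ because $C_{u,v}[k]=1$ there would force $v$ to occur as a factor of $u$ (contradicting the hypothesis), and for $k>n-m$ reduce to the index analysis of Lemma~\ref{lem2}. Your writeup is somewhat more explicit about the range of valid indices and the converse direction, but the underlying argument is identical.
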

\begin{proof}
    First, consider $k\leq n-m$. Then for each $j\in[m]$, we have $i=j+k\leq j+(n-m)<n$. So in order for $C_{u,v}[k]=1$, we would need $u[k+j]=v[j]$ for each $j\in[m]$, i.e. we would need $v$ to be a subword of $u$, which is a contradiction. So $C_{u,v}[k]=0$ for $k\leq n-m$.
    When $k>n-m$, the proof is the same as in Lemma \ref{lem2}.
\end{proof}
\begin{thm}\label{thm1}
    Let $n\leq m$ and let $u\in A^n, v\in A^m$.  If $u$ and $v$ are palindromes then $C_{u,v} = C_{v,u}[m-n..m-1]$.
\end{thm}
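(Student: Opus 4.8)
The plan is to reduce each entry of $C_{u,v}$ and of the window $C_{v,u}[m-n..m-1]$ to a combinatorial statement about matching a suffix (or prefix) of $u$ against a prefix (or suffix) of $v$, and then to exchange the two statements using the fact that reversal fixes a palindrome. First I would index the window by $t \in [n]$, writing its $t$-th entry as $C_{v,u}[m-n+t]$, so that the theorem becomes the pointwise claim $C_{u,v}[t] = C_{v,u}[m-n+t]$ for every $t \in [n]$.

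For the left-hand side the work is already done: since $n \leq m$, Lemma~\ref{lem2} says that $C_{u,v}[t] = 1$ if and only if the length-$(n-t)$ suffix of $u$ equals the length-$(n-t)$ prefix of $v$. For the right-hand side I would go back to Definition~\ref{correlation} directly. Putting $k = m-n+t$, the admissible indices in the condition $i = j+k \in [m]$, $j \in [n]$ are exactly $0 \le j \le n-1-t$, so $C_{v,u}[m-n+t] = 1$ if and only if $u[j] = v[m-n+t+j]$ for all such $j$; equivalently, the length-$(n-t)$ prefix of $u$ equals the length-$(n-t)$ suffix of $v$. The only care required here is the bookkeeping on the range of $j$ and the observation that the matched block $v[m-n+t..m-1]$ is precisely the length-$(n-t)$ suffix of $v$; this is the one mildly fiddly step, and it is routine.

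Finally I would invoke the palindrome hypothesis to line up the two conditions. Writing $\ell = n-t$, the condition for $C_{u,v}[t]$ is ``length-$\ell$ suffix of $u$ equals length-$\ell$ prefix of $v$,'' while the condition for $C_{v,u}[m-n+t]$ is ``length-$\ell$ prefix of $u$ equals length-$\ell$ suffix of $v$.'' Because $u = \tilde u$, the reverse of the length-$\ell$ suffix of $u$ is its length-$\ell$ prefix, and likewise for $v$; since two words are equal exactly when their reverses are, the two conditions are logically equivalent. Hence $C_{u,v}[t] = 1 \iff C_{v,u}[m-n+t] = 1$, and as both entries are binary they agree for every $t \in [n]$, which gives $C_{u,v} = C_{v,u}[m-n..m-1]$.

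The argument is short, and I do not expect a real obstacle: the genuine content is entirely the reversal symmetry supplied by the palindrome assumption. If anything is delicate it is only arranging the two index windows so that they match the common length $\ell = n-t$; once that alignment is in place (and it holds uniformly, covering the case $m=n$ with no separate treatment), the conclusion is immediate and Lemma~\ref{lem3} is not even needed.
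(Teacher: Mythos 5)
Your proof is correct and follows essentially the same route as the paper's: both reduce the claim to the pointwise statement $C_{u,v}[k]=C_{v,u}[m-n+k]$, interpret each side as a suffix-of-one-word-equals-prefix-of-the-other condition (you via the definition for the right-hand side, the paper via a terser appeal to the same fact), and then exchange the two conditions using reversal and the palindrome hypothesis. Your version just spells out the index bookkeeping for $C_{v,u}[m-n+t]$ a little more explicitly than the paper does.
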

\begin{proof}
     It is sufficient to show $C_{u,v}[k] = C_{v,u}[m-n+k]$ for each $k\in[n]$.  By Lemma~\ref{lem2}, we have $C_{u,v}[k]=1$
     if and only if $u$ has a suffix $x$ of length $n-k$ that is also a prefix of $v$.  Since $u$ and $v$ are palindromes,
     this holds if and only if $\tilde{x}$ is a prefix of $u$ and a suffix of $v$.  This existence of such an
     $\tilde{x}$ is equivalent to $C_{v,u}[m-n+k]=1$.  We get $C_{u,v}[k] = C_{v,u}[m-n+k]$, as required.     
     
    
\end{proof}
\begin{lem}\label{lem4}
    Let $\phi$ be the Fibonacci morphism. Then the following hold for $n\geq1$:
    \begin{description}
        \item[a)] $\phi(1p_{2n+2}1)0$ = $0p_{2n+3}0$ 
        \item[b)] $0^{-1}\phi(0p_{2n+1}0)$ = $1p_{2n+2}1$
        \item[c)] $\phi(M_{2n+1})0$ = $M_{2n+2}$
        \item[d)] $0^{-1}\phi(M_{2n+2})$ = $M_{2n+3}$
    \end{description}
\end{lem}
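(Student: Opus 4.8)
The plan is to reduce all four identities to two ``master'' identities at the level of the palindromic prefixes and then read off the four claims as special cases. The only tool needed is Lemma~\ref{pn:phi}, which I will use in the rewritten form $\phi(p_k)=p_{k+1}0^{-1}$; this rewriting is legitimate because every $p_{k+1}$ with $k\geq 3$ is a nonempty palindrome beginning with $0$, hence ends in $0$, so right-cancellation of a trailing $0$ is well defined. Together with the morphism values $\phi(0)=01$ and $\phi(1)=0$, this is the entire toolkit.

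First I would establish the identity
\[
\phi(1\,p_k\,1)\,0 = 0\,p_{k+1}\,0, \qquad k\geq 3,
\]
which I will refer to as (I). Expanding the left side gives $\phi(1)\phi(p_k)\phi(1)\,0 = 0\,\phi(p_k)\,0\,0$, and substituting $\phi(p_k)=p_{k+1}0^{-1}$ yields $0\,p_{k+1}0^{-1}\,00 = 0\,p_{k+1}\,0$, which is (I). Part~\textbf{a)} is then (I) with $k=2n+2$, and part~\textbf{c)} is (I) with $k=2n+1$ combined with the definitions $M_{2n+1}=1p_{2n+1}1$ and $M_{2n+2}=0p_{2n+2}0$ from \eqref{M}. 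Next I would establish the companion identity
\[
0^{-1}\,\phi(0\,p_k\,0) = 1\,p_{k+1}\,1, \qquad k\geq 3,
\]
which I will refer to as (II). Here $\phi(0\,p_k\,0)=01\,\phi(p_k)\,01$, so cancelling the leading $0$ on the left gives $1\,\phi(p_k)\,01 = 1\,p_{k+1}0^{-1}\,01 = 1\,p_{k+1}\,1$, which is (II). Part~\textbf{b)} is (II) with $k=2n+1$, and part~\textbf{d)} is (II) with $k=2n+2$ together with $M_{2n+2}=0p_{2n+2}0$ and $M_{2n+3}=1p_{2n+3}1$.

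The computations are genuinely routine, and I expect no conceptual obstacle. The one point that requires care is the bookkeeping of the inverse operations: one must verify that the trailing $0$ cancelled on the right in (I) and the leading $0$ cancelled on the left in (II) are actually present, and one must keep right-cancellation $p_{k+1}0^{-1}$ cleanly separated from left-cancellation $0^{-1}(\cdots)$. Tracking these boundary letters correctly is the only place where an error could creep in.
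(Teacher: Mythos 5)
Your proposal is correct and follows essentially the same route as the paper: expand via the morphism, substitute $\phi(p_k)=p_{k+1}0^{-1}$ from Lemma~\ref{pn:phi}, and cancel the boundary letters. The only difference is cosmetic — you state the two computations once as general identities in $k$ and specialize, whereas the paper carries out the same calculation separately for parts a)--d) with the concrete indices.
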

\begin{proof}
    Using Lemma~\ref{pn:phi}, we have
    \[\phi(1p_{2n+2}1)0=\phi(1)\phi(p_{2n+2})\phi(1)0=0p_{2n+3}0^{-1}00=0p_{2n+3}0,\]
    and
    \[0^{-1}\phi(0p_{2n+1}0)=0^{-1}\phi(0)\phi(p_{2n+1})\phi(0)=0^{-1}01p_{2n+2}0^{-1}01=1p_{2n+2}1.\]
    Similarly, using \eqref{M} and Lemma~\ref{pn:phi}, we have
    \[\phi(M_{2n+1})0=\phi(1p_{2n+1}1)0=\phi(1)\phi(p_{2n+1})\phi(1)0=0p_{2n+2}0^{-1}00=0p_{2n+2}0=M_{2n+2},\]
    and
    \begin{multline*}
    0^{-1}\phi(M_{2n+2})=0^{-1}\phi(0p_{2n+2}0)=0^{-1}\phi(0)\phi(p_{2n+2})\phi(0)=0^{-1}01p_{2n+3}0^{-1}01\\
    =1p_{2n+3}1=M_{2n+3}.
    \end{multline*}
\end{proof}
For the following Lemma, we will be using the inverse of $\phi$, but we have to be careful to avoid any ambiguity. For example, there is no word $u$ such that $\phi(u) = 1$, so $\phi^{-1}(1)$ does not exist. To avoid such ambiguities, we will define the inverse of $\phi$, denoted $\phi^{-1}$ as such:
     \begin{equation}\label{phi:inverse}
         u = \phi(v) \implies \phi^{-1}(u) = v,\;\text{for}\; u,v,\in\{0,1\}^*
     \end{equation}
\begin{lem}\label{lem5}
    Let $\phi$ be the Fibonacci morphism. Then the following hold for $n\geq1$:
     \begin{description}
        \item[a)]  $b$ is a border of $M_{2n+1}$ $\implies$ $\phi(b)0$ is a border of $M_{2n+2}$
        \item[b)] $b$ is a border of $M_{2n+2}$ $\implies$ $0^{-1}\phi(b)$ is a border of $M_{2n+3}$
        \item[c)] $b$ is a border of $M_{2n+3}$ $\implies$ $\phi^{-1}(0b)$ is a border of $M_{2n+2}$
        \item[d)] $b$ is a border of $M_{2n+2}$ $\implies$ $\phi^{-1}(b0^{-1})$ is a border of $M_{2n+1}$
    \end{description}
\end{lem}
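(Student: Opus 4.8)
The plan is to reduce all four implications to the full-word identities of Lemma~\ref{lem4}, namely $\phi(M_{2n+1})0=M_{2n+2}$ and $0^{-1}\phi(M_{2n+2})=M_{2n+3}$ (equivalently $\phi(M_{2n+2})=0M_{2n+3}$ and $M_{2n+2}=\phi(M_{2n+1})0$), supplemented by two elementary facts about $\phi$. The first is that every nonempty image $\phi(w)$ begins with $0$, since $\phi(0)=01$ and $\phi(1)=0$. The second is a recognizability fact: no image $\phi(w)$ contains the factor $11$ (within a block $01$ there is no $11$, and across blocks the only adjacent pairs are $00$ or $10$), so $\phi(w)$ factors uniquely into blocks $0$ and $01$, with block boundaries occurring (i) immediately before every occurrence of $0$ and (ii) immediately after every occurrence of $1$, together with the two ends. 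I would record (i) and (ii) as a short preliminary observation, since they are exactly what makes $\phi^{-1}$ unambiguous on the words arising below.

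For \textbf{a)} and \textbf{b)} only the first fact is needed. In \textbf{a)}, writing a border $b$ of $M_{2n+1}$ simultaneously as a prefix $M_{2n+1}=by$ and as a suffix $M_{2n+1}=xb$ and applying $\phi$ to $M_{2n+2}=\phi(M_{2n+1})0$ shows $\phi(b)0$ is a suffix immediately, and a prefix because $\phi(y)$ begins with $0$ (the case $y=\epsilon$ giving the whole word). In \textbf{b)}, working in $\phi(M_{2n+2})=0M_{2n+3}$ the same bookkeeping yields $0^{-1}\phi(b)$ as a prefix after cancelling the leading $0$, while $\phi(b)$ is a suffix of $M_{2n+3}$ and hence so is its own suffix $0^{-1}\phi(b)$. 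These are routine once the cancellations are tracked, with the empty border and the full-word border handled as explicit cases.

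Parts \textbf{c)} and \textbf{d)} are the inverse correspondences, and here the recognizability fact carries the argument; this is the step I expect to be the main obstacle. For \textbf{c)}, take a nonempty border $b$ of $M_{2n+3}=1p_{2n+3}1$, which therefore begins and ends with $1$. I would show that $0b$ sits block-aligned inside $\phi(M_{2n+2})=0M_{2n+3}$ in two ways: as a prefix it starts at position $0$ and ends just after a $1$, so (ii) gives $0b=\phi(c)$ with $c$ a prefix of $M_{2n+2}$; as a suffix, the letter preceding $b$ is forced to be $0$ (an image has no $11$), so $0b$ is a suffix beginning with $0$ and (i) gives $0b=\phi(c')$ with $c'$ a suffix of $M_{2n+2}$. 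Injectivity of $\phi$ forces $c=c'=\phi^{-1}(0b)$, a border. Part \textbf{d)} is symmetric: for a nonempty border $b$ of $M_{2n+2}=0p_{2n+2}0$ (beginning and ending with $0$), working in $M_{2n+2}=\phi(M_{2n+1})0$, the word $b0^{-1}$ ends just before a $0$ and begins with a $0$, so (i) supplies block boundaries at both cuts and $\phi^{-1}(b0^{-1})$ is a border of $M_{2n+1}$.

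The crux in \textbf{c)} and \textbf{d)} is exactly the verification that the one-letter adjustment ($0b$ or $b0^{-1}$) falls on block boundaries at both ends at once; this is where the absence of $11$—equivalently, that $M_3=11$ is forbidden—is essential, and it is precisely what makes $\phi^{-1}$ defined on these words. Recording (i) and (ii) up front should reduce \textbf{c)} and \textbf{d)} to inspecting the first and last letters of the border in each case. Throughout I would take $b$ to be a nonempty (and, where needed, proper) border and treat the empty and full-word borders separately, since the operations $0^{-1}(\cdot)$, $(\cdot)0^{-1}$, and $\phi^{-1}$ are not defined on the empty word.
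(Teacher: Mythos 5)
Your proposal is correct and follows essentially the same route as the paper: parts a) and b) via the decompositions $M=bu=vb$ pushed through $\phi$ using Lemma~\ref{lem4} and the fact that nonempty images of $\phi$ begin with $0$, and parts c) and d) by pulling suitably aligned decompositions back through $\phi^{-1}$. Your block-boundary observations (i) and (ii) are precisely what the paper leaves implicit when it peels off letters ($v=v'0$ in c), $b=b'00$ in d)) and asserts that the relevant $\phi^{-1}$-images exist, so if anything your version makes that step more transparent.
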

\begin{proof}
    Let $b$ be a border of $M_{2n+1}$ for some $n\geq1$. Then $M_{2n+1} = bu = vb$ for some words $u$ and $v$. Since $M_n$ is a border of itself for each $M_n$, Lemma \ref{lem4} $\textbf{c)}$ confirms $\textbf{a)}$ holds for $b=M_{2n+1}$. So assume $u,v\neq\epsilon$. Then we can expand $M_{2n+2}$ two different ways,
    \begin{align*}
       M_{2n+2} &= \phi(M_{2n+1})0 = \phi(bu)0 = \phi(b)\phi(u)0 \\
        M_{2n+2} &= \phi(M_{2n+1})0 = \phi(vb)0 = \phi(v)\phi(b)0  
    \end{align*}
    and by \eqref{phi}, the first letter of $\phi(u)$ will be $0$ no matter what the first letter of $u$ is. Hence, we can see that $M_{2n+2}$ both starts and ends with $\phi(b)0$, making $\phi(b)0$ a border of $M_{2n+2}$, confirming $\textbf{a)}$.
    
    Now let $b$ be a border of $M_{2n+2}$ for some $n\geq1$. Then $M_{2n+2} = bu = vb$ for some words $u$ and $v$. Again Lemma \ref{lem4} $\textbf{d)}$ confirms that $\textbf{b)}$ holds for $b=M_{2n+2}$, so assume $u,v\neq\epsilon$. Then since each even minimal forbidden Fibonacci word starts (and ends) with a 0, we can let $b=0b'$, then $\phi(b) = 01\phi(b')$ (if $b=0$, then $b'=\epsilon$, and $\phi(\epsilon) = \epsilon$) to get
    \begin{align*}
    M_{2n+3} &= 0^{-1}\phi(M_{2n+2}) = 0^{-1}\phi(bu) = 0^{-1}01\phi(b')\phi(u) = 1\phi(b')\phi(u) \\
        M_{2n+3} &= 0^{-1}\phi(M_{2n+2}) = 0^{-1}\phi(vb) = 0^{-1}\phi(v)01\phi(b')
    \end{align*}
     Thus $M_{2n+3}$ both starts and ends with $1\phi(b') = 0^{-1}\phi(b)$, making $0^{-1}\phi(b)$ a border of $M_{2n+3}$, confirming $\textbf{b)}$.
     
     Next, let $b$ be a border of $M_{2n+3}$ for some $n\geq1$. Then $M_{2n+3} = bu = vb$ for some words $u$ and $v$. First, consider the case where $b = M_{2n+3}$. Then by Lemma \ref{lem4} $\textbf{d)}$ and \eqref{phi:inverse},  
     \begin{equation}\label{lem5eq1}
         \phi^{-1}(0M_{2n+3}) = \phi^{-1}(00^{-1}\phi(M_{2n+2})) = M_{2n+2}
     \end{equation}
     So $\textbf{c)}$ holds in this case, and we can assume $u,v\neq\epsilon$. Then since each odd minimal forbidden Fibonacci word starts (and ends) with a $1$, the first letter of $b$ will be $1$. Furthermore, since the subword $11$ is forbidden, the last letter of $v$ and the first letter of $u$ must be 0. So if we let $v=v'0$, then $M_{2n+3}$ = $v'0b$, and both $\phi^{-1}(u)$, $\phi^{-1}(0v')$ exist. Thus, by \eqref{phi:inverse} and \eqref{lem5eq1},
     \begin{align*}
        M_{2n+2} &= \phi^{-1}(0M_{2n+3}) = \phi^{-1}(0bu) = \phi^{-1}(0b)\phi^{-1}(u) \\
        M_{2n+2} &= \phi^{-1}(0M_{2n+3}) = \phi^{-1}(0vb) = \phi^{-1}(0v')\phi^{-1}(0b) 
     \end{align*}
     so $M_{2n+2}$ both starts and ends with the subword $\phi^{-1}(0b)$, making $\phi^{-1}(0b)$ a border of $M_{2n+2}$, confirming $\textbf{c)}$.
     
     Lastly, let $b$ be a border of $M_{2n+2}$ for some $n\geq1$. Then $M_{2n+2} = bu = vb$. First, consider the case where $b=M_{2n+2}$. Then by Lemma \ref{lem4} $\textbf{c)}$ and \eqref{phi:inverse}, 
     \begin{equation}\label{lem5eq2}
         \phi^{-1}(M_{2n+2}0^{-1}) = \phi^{-1}(\phi(M_{2n+1})00^{-1}) = M_{2n+1}
     \end{equation}
     So $\textbf{d)}$ holds in this case, and we can assume $u,v\neq\epsilon$. Also, $b=0$ would give $\phi(b0^{-1}) = \phi(\epsilon) = \epsilon$, which is trivially a border of $M_{2n+1}$, so we may assume $|b|>1$.
     
     As every even minimal forbidden Fibonacci word starts (and ends) with $00$, we may write $b=b'00$ (if $b=00$, then $b'=\epsilon$ and $\phi^{-1}(\epsilon)=\epsilon$), and both $\phi^{-1}(b'0)$, $\phi^{-1}(0u0^{-1})$ exist. Then by \eqref{phi:inverse} and \eqref{lem5eq2},
     \begin{align*}
         M_{2n+1} &= \phi^{-1}(M_{2n+2}0^{-1}) = \phi^{-1}(b'00u0^{-1}) = \phi^{-1}(b'0)\phi^{-1}(0u0^{-1}) \\
         M_{2n+1} &= \phi^{-1}(M_{2n+2}0^{-1}) = \phi^{-1}(vb'000^{-1}) = \phi^{-1}(v)\phi^{-1}(b'0)
     \end{align*}
     so $M_{2n+1}$ both starts and ends with the subword $\phi^{-1}(b'0) = \phi^{-1}(b0^{-1})$, making $\phi^{-1}(b0^{-1})$ a border of $M_{2n+1}$, confirming $\textbf{d)}$ and finishing the proof.
\end{proof}
For $n\geq3$, we define $B_n$ to be the set of all nonempty borders of $M_n$. For example, $B_6 = \{0, 00, 00100, M_6\}$ and $B_7 = \{1, 101, 10100101, M_7\}$.
\begin{thm}\label{thm3}
    For $n\geq1$, the set of all nonempty borders of $M_{2n+2}$ and $M_{2n+3}$ are given by:
    \begin{align*}
        B_{2n+2} &= \{0\}\cup\{\phi(b)0\;|\;b\in B_{2n+1}\} \\
        B_{2n+3} &= \{0^{-1}\phi(b)\;|\;b\in B_{2n+2}\}
    \end{align*}
    with $B_3 = \{1, 11\}$.
\end{thm}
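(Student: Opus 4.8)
The plan is to read Theorem~\ref{thm3} as a purely set-theoretic repackaging of Lemma~\ref{lem5}: the four implications there already transport borders back and forth between consecutive minimal forbidden words, so all that remains is to check that the relevant maps are mutually inverse bijections and to keep careful track of the empty word. I would first dispose of the base case by inspection: since $M_3 = 11$, its only nonempty borders are $1$ and $11$, giving $B_3 = \{1,11\}$.

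For the first recursion, define $\alpha(b) = \phi(b)0$ on $\overline{B}_{2n+1} := B_{2n+1}\cup\{\epsilon\}$ and $\beta(c) = \phi^{-1}(c\,0^{-1})$ on $B_{2n+2}$. Lemma~\ref{lem5}\textbf{a)}, together with the direct observation that $M_{2n+2}=0p_{2n+2}0$ has $0=\alpha(\epsilon)$ as a border, shows $\alpha(\overline{B}_{2n+1})\subseteq B_{2n+2}$, while Lemma~\ref{lem5}\textbf{d)} shows $\beta(B_{2n+2})\subseteq \overline{B}_{2n+1}$. Every nonempty border of $M_{2n+2}$ is in particular a nonempty suffix, hence ends in the last letter $0$ of $M_{2n+2}$, so $c\,0^{-1}$ is always defined; the cancellations $\beta(\alpha(b)) = \phi^{-1}(\phi(b)0\,0^{-1}) = b$ and $\alpha(\beta(c)) = \phi(\phi^{-1}(c\,0^{-1}))0 = c$ are then immediate. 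Thus $\alpha$ is a bijection from $\overline{B}_{2n+1}$ onto $B_{2n+2}$, and since $\alpha(\epsilon)=0$ while $\alpha(b) = \phi(b)0$ for $b \in B_{2n+1}$, this yields $B_{2n+2} = \{0\}\cup\{\phi(b)0 \mid b \in B_{2n+1}\}$.

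For the second recursion I would argue identically with $\gamma(b) = 0^{-1}\phi(b)$ on $B_{2n+2}$ and $\delta(c) = \phi^{-1}(0c)$ on $B_{2n+3}$, using Lemma~\ref{lem5}\textbf{b)} and \textbf{c)} for well-definedness into the respective border sets and the cancellations $\delta(\gamma(b)) = \phi^{-1}(0\,0^{-1}\phi(b)) = b$ and $\gamma(\delta(c)) = 0^{-1}\phi(\phi^{-1}(0c)) = c$ for the bijection. Here no empty word intervenes, since the smallest borders $0$ of $M_{2n+2}$ and $1$ of $M_{2n+3}$ already correspond under $\gamma$ and $\delta$. Surjectivity of $\gamma$ then gives $B_{2n+3} = \{0^{-1}\phi(b)\mid b\in B_{2n+2}\}$.

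The only genuine subtlety — and the step I expect to need the most care — is the bookkeeping of the empty word in the first recursion: the map $\beta$ sends the length-one border $0$ of $M_{2n+2}$ to $\epsilon$, so restricting to nonempty borders would destroy the bijection. Enlarging the domain to $\overline{B}_{2n+1}$ and matching $\epsilon \leftrightarrow 0$ repairs this, and is precisely why the isolated term $\{0\}$ appears in the formula for $B_{2n+2}$ but not in that for $B_{2n+3}$. Verifying that each occurrence of $0^{-1}$ or $\phi^{-1}$ above is legitimate — that each argument really begins with $0$ or lies in the image of $\phi$ — is routine given the first- and last-letter structure recorded in the proof of Lemma~\ref{lem5}.
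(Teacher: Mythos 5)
Your proposal is correct and follows essentially the same route as the paper: both arguments reduce the theorem to the four implications of Lemma~\ref{lem5}, obtain one inclusion from parts \textbf{a)} and \textbf{b)} and the reverse inclusion from parts \textbf{d)} and \textbf{c)} via the cancellations $\phi(\phi^{-1}(b0^{-1}))0=b$ and $0^{-1}\phi(\phi^{-1}(0b))=b$, and handle the border $0$ of $M_{2n+2}$ separately. Packaging this as an explicit pair of mutually inverse bijections (with $\epsilon$ adjoined to $B_{2n+1}$) is only a cosmetic reorganization of the paper's two-inclusion argument.
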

\begin{proof}
    $\textbf{Even case}$: Consider $M_{2n+2}$, for some $n\geq1$. By construction, 0 is a border of $M_{2n+2}$, so $0\in B_{2n+2}$. Furthermore, Lemma \ref{lem5} \textbf{a)} tells us that $\phi(b)0$ is a border of $M_{2n+2}$ for each $b\in B_{2n+1}$, so 
    \begin{equation}\label{thm3eq1}
        \{0\}\cup\{\phi(b)0\;|\;b\in B_{2n+1}\} \subseteq B_{2n+2}.
    \end{equation}
    Now let $b\in B_{2n+2}$, $b\neq0$. Then $b$ must start and end with a 0, so we can write $b=b'0$. Then by Lemma \ref{lem5} $\textbf{d)}$, $a=\phi^{-1}(b0^{-1})$ is a border of $M_{2n+1}$, so $a\in B_{2n+1}$. By Lemma \ref{lem5} $\textbf{a)}$ and \eqref{phi:inverse},
    \begin{align*}
        \phi(a)
        &= \phi(\phi^{-1}(b0^{-1}))0 \\
        &= \phi(\phi^{-1}(b'))0 \\
        &= b'0 \\
        &= b
    \end{align*}
    so $b\in\{\phi(a)0\;|\;a\in B_{2n+1}\}$. But $b$ was arbitrary, therefore
    \begin{equation}\label{thm3eq2}
        B_{2n+2} \subseteq \{0\}\cup\{\phi(b)0\;|\;b\in B_{2n+1}\}.
    \end{equation}
    Thus, by \eqref{thm3eq1} and \eqref{thm3eq2}, $B_{2n+2} = \{0\}\cup\{\phi(b)0\;|\;b\in B_{2n+1}\}$.\\ \\
    $\textbf{Odd case}$: Consider $M_{2n+3}$, for some $n\geq1$. Then by Lemma \ref{lem5} $\textbf{b)}$, $0^{-1}\phi(b)$ is a border of $M_{2n+3}$ for each $b\in B_{2n+2}$, so 
    \begin{equation}\label{thm3eq3}
        \{0^{-1}\phi(b)\;|\;b\in B_{2n+2}\} \subseteq B_{2n+3} 
    \end{equation}
    Now let $b\in B_{2n+3}$. Then $b$ must start and end with a 1, so we can write $b=1b'$. Then by Lemma \ref{lem5} $\textbf{c)}$, $a=\phi^{-1}(0b)$ is a border of $M_{2n+2}$, so $a\in B_{2n+2}$. By Lemma \ref{lem5} $\textbf{b)}$ and \eqref{phi:inverse},
    \begin{align*}
        \phi(a)
        &= 0^{-1}\phi(\phi^{-1}(0b)) \\ 
        &= 0^{-1}\phi(\phi^{-1}(01b')) \\ 
        &= 0^{-1}\phi(0\phi^{-1}(b')) \\
        &= 0^{-1}01\phi(\phi^{-1}(b')) \\ 
        &= 1b' \\ 
        &= b
    \end{align*}
    so $b\in\{0^{-1}\phi(a)\;|\;a\in B_{2n+2}\}$. But $b$ was arbitrary, therefore
    \begin{equation}\label{thm3eq4}
        B_{2n+3} \subseteq \{0^{-1}\phi(a)\;|\;a\in B_{2n+2}\}
    \end{equation}
    Thus, by \eqref{thm3eq3} and \eqref{thm3eq4}, $B_{2n+3} = \{0^{-1}\phi(a)\;|\;a\in B_{2n+2}\}$.
\end{proof}
\begin{thm}\label{thm4}
    For $n\geq1$, the set of all nonempty borders of $M_{2n+2}$ and $M_{2n+3}$ are given by:
    \begin{align*}
        B_{2n+2} &= \{0\}\cup\{M_{2n+2}\}\cup\{0p_{2(n-i)+1}0\;|\;i\in[n]\} \\
        B_{2n+3} &= \{1\}\cup\{M_{2n+3}\}\cup\{1p_{2(n-i)+2}1\;|\;i\in[n]\}
    \end{align*}
    with $B_3 = \{1,11\}$.
\end{thm}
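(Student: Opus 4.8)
The plan is to prove the two identities simultaneously by induction on $n$, feeding the recursive description of Theorem~\ref{thm3} through the explicit images computed in Lemma~\ref{lem4}. The only auxiliary facts needed are $p_3 = \epsilon$ and $p_4 = 0$ (from Table~\ref{tab3}), which give $0p_3 0 = 00$ and $1p_4 1 = 101$; these let the shortest borders be absorbed into the indexed families rather than appearing as separate elements.

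For the base case $n=1$ I would start from $B_3 = \{1,11\}$ and apply Theorem~\ref{thm3} directly, obtaining $B_4 = \{0\}\cup\{\phi(1)0,\phi(11)0\} = \{0,00,000\}$ and then $B_5 = \{0^{-1}\phi(b) : b\in B_4\} = \{1,101,10101\}$; comparing with the claimed formulas (using $M_4 = 000$, $M_5 = 10101$, $0p_3 0 = 00$, and $1p_4 1 = 101$) settles this case.

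For the inductive step, assume the formulas for $B_{2n+2}$ and $B_{2n+3}$. I would first compute $B_{2n+4} = B_{2(n+1)+2}$ from $B_{2n+3}$ using the even recursion $B_{2n+4} = \{0\}\cup\{\phi(b)0 : b\in B_{2n+3}\}$, evaluating $\phi(\cdot)0$ term by term: the singleton $1$ gives $\phi(1)0 = 00 = 0p_3 0$, the word $M_{2n+3}$ gives $M_{2n+4}$ by Lemma~\ref{lem4}\textbf{c)}, and each $1p_k 1$ (with $k$ even, $4\le k\le 2n+2$) gives $0p_{k+1}0$ by Lemma~\ref{lem4}\textbf{a)}. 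Tracking subscripts shows the resulting family is $\{0p_j 0 : j\text{ odd},\,3\le j\le 2n+3\}$, matching the claimed $B_{2n+4}$. I would then feed this into the odd recursion $B_{2n+5} = \{0^{-1}\phi(b) : b\in B_{2n+4}\}$: the singleton $0$ gives $0^{-1}\phi(0) = 1$, the word $M_{2n+4}$ gives $M_{2n+5}$ by Lemma~\ref{lem4}\textbf{d)}, and each $0p_j 0$ (including $0p_3 0 = 00$) gives $1p_{j+1}1$ by Lemma~\ref{lem4}\textbf{b)}, producing the family $\{1p_k 1 : k\text{ even},\,4\le k\le 2n+4\}$, which is the claimed $B_{2n+5}$.

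The routine part is the symbolic application of Lemma~\ref{lem4}; the real work is the index bookkeeping, namely checking that each recursion shifts the family of subscripts by exactly one ($k\mapsto k+1$) and that the degenerate prefixes $p_3 = \epsilon$, $p_4 = 0$ make the endpoint terms coincide correctly. In the even step the image $\phi(1)0 = 00$ of the singleton $1$ must be recognized as the smallest family member $0p_3 0$ rather than as a separate element; dually, in the odd step the smallest incoming member $0p_3 0 = 00$ maps to $1p_4 1 = 101$ while the incoming singleton $0$ maps to the outgoing singleton $1$. Getting these endpoint identifications right is exactly what guarantees the indexed families close up with no border omitted or double-counted; the $M_{2n+2}$ and $M_{2n+3}$ terms carry through cleanly via parts \textbf{c)} and \textbf{d)} of Lemma~\ref{lem4}.
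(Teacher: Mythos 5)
Your proposal is correct and follows essentially the same route as the paper: induction on $n$, applying the recursions of Theorem~\ref{thm3} and evaluating each image via Lemma~\ref{lem4}, with the same endpoint identifications $\phi(1)0 = 00 = 0p_30$ and $0^{-1}\phi(0)=1$ absorbing the short borders into the indexed families. The only cosmetic difference is that the paper phrases the inductive step as deriving $B_{2k+2}$ and $B_{2k+3}$ from the hypothesis at $n=k-1$, whereas you derive $B_{2n+4}$ and $B_{2n+5}$ from the case $n$; these are the same argument.
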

\begin{proof}
    We will prove this by induction on $n$. \\ \\
    $\textbf{Base case, n=1}$:
        \begin{itemize}
            \item $M_4=000$ has borders $0$, $00$, and $000$, so $B_4 = \{0,00,000\}$. But $p_3=\epsilon$, so $0p_30 = 00$, thus $\{0\}\cup\{M_4\}\cup\{0p_{2(1-i)+1}0\;|\;i\in[1]\} = \{0,00,000\}=B_4$.
            \item $M_5 = 10101$ has borders $1$, $101$, $10101$, so $B_5 = \{1,101,10101\}$. But $p_4=0$, so $1p_41=101$, thus $\{1\}\cup\{M_5\}\cup\{0p_{2(1-i)+2}0\;|\;i\in[1]\} = \{1,101,10101\} = B_5$, which confirms the base case.
        \end{itemize}
    $\textbf{Inductive step}$: Let $k>1$ and assume the statement holds up to and including $n=k-1$. \\ \\
    We start with the even word, $M_{2k+2}$. As all even minimal forbidden Fibonacci words start and end with 0, the word 0 is a border of $B_{2k+2}$. By the inductive hypothesis, all borders of $M_{2(k-1)+3} = M_{2k+1}$ are given by the set $B_{2k+1} = \{1\}\cup\{M_{2k+1}\}\cup\{1p_{2(k-i)}1\;|\;i\in[k-1]\}$. By Theorem \ref{thm3}, we also have that $B_{2k+2} = \{0\}\cup\{\phi(b)0\;|\;b\in B_{2k+1}\}$. So using Lemma \ref{lem4},
    \begin{align*}
        \phi(1)0 &= 00 = 0p_30 \\
        \phi(M_{2k+1})0 &= M_{2k+2} \\
        \phi(1p_{2(k-i)}1)0 &= 0p_{2(k-i)+1}0,\;\; \forall i\in[k-1].
    \end{align*}
    Note that $p_3 = p_{2(k-(k-1))+1}$, and thus
    \begin{align*}
        B_{2k+2} = \{0\}\cup\{\phi(b)0\;|\;b\in B_{2k+1}\} = \{0\}\cup\{M_{2k+2}\}\cup\{0p_{2(k-i)+1}0\;|\;i\in[k]\}.
    \end{align*}
    Next, by Theorem \ref{thm3}, we know $B_{2k+3} = \{0^{-1}\phi(b)\;|\;b\in B_{2k+2}\}$. So using Lemma \ref{lem4} again,
    \begin{align*}
        0^{-1}\phi(0) &= 0^{-1}01 = 1 \\
        0^{-1}\phi(M_{2k+2}) &= M_{2k+3} \\
        0^{-1}\phi(0p_{2(k-i)+1}0) &= 1p_{2(k-i)+2}1,\;\; \forall i\in[k]
    \end{align*}
    and thus 
    \begin{align*}
        B_{2k+3} =  \{0^{-1}\phi(b)\;|\;b\in B_{2k+2}\} = \{1\}\cup\{M_{2k+3}\}\cup\{1p_{2(k-i)+2}1\;|\;i\in[k]\}.
    \end{align*}
\end{proof}
\begin{thm}\label{thm5}
    Let $3\leq n\leq m$. Then
    \begin{align*}
        C_{M_n,M_m}(z) = C_{M_m,M_n}(z) = \sum_{b\in B_n\cap B_m}z^{|b|-1}.
    \end{align*}
\end{thm}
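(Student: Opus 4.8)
The plan is to reduce everything, via Lemma~\ref{lem2}, to counting the lengths $\ell$ for which the length-$\ell$ suffix of $M_n$ coincides with the length-$\ell$ prefix of $M_m$, and then to match these ``overlap lengths'' with the lengths of the common borders in $B_n\cap B_m$. Throughout I take $n\le m$, so $|M_n|=F_n\le F_m=|M_m|$.

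First I would dispose of the symmetry $C_{M_n,M_m}(z)=C_{M_m,M_n}(z)$. Since $M_n$ is forbidden while every proper factor of $M_m$ lies in $L$ (Remark~\ref{Rem1}), $M_n$ is not a factor of $M_m$ when $n<m$; hence Lemma~\ref{lem3} gives $C_{M_m,M_n}[k]=0$ for all $k\le F_m-F_n$. As $M_n$ and $M_m$ are palindromes, Theorem~\ref{thm1} gives $C_{M_n,M_m}=C_{M_m,M_n}[F_m-F_n..F_m-1]$. Reindexing the correlation polynomial of $M_m$ over $M_n$ by $k=(F_m-F_n)+j$ and discarding the coefficients with $k\le F_m-F_n$, which vanish, makes the two polynomials agree term-by-term; the case $n=m$ is trivial. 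So it suffices to evaluate $C_{M_n,M_m}(z)$, and by Lemma~\ref{lem2},
\[
C_{M_n,M_m}(z)=\sum_{\ell\in O}z^{\ell-1},\qquad O=\{\ell:\ M_n[F_n-\ell..F_n-1]=M_m[0..\ell-1]\},
\]
because the contribution of each $k$ with $C_{M_n,M_m}[k]=1$ is $z^{F_n-1-k}=z^{\ell-1}$ where $\ell=F_n-k$.

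Next I would prove the identity $O=\{|b|:\ b\in B_n\cap B_m\}$, which immediately yields the theorem. One inclusion is free: a common border $b$ of length $\ell$ is simultaneously a suffix of $M_n$ and a prefix of $M_m$, so $\ell\in O$. For the reverse inclusion, write $c_k=1$ for $k$ odd and $c_k=0$ for $k$ even, so that $M_k=c_k\,p_k\,c_k$ with $p_k$ a palindromic prefix of $\mathbf f$; since the $p_k$ are nested prefixes of $\mathbf f$, for $1\le\ell\le F_k-1$ the prefix $M_k[0..\ell-1]$ equals $c_k$ followed by the length-$(\ell-1)$ prefix of $\mathbf f$. Because $M_n$ is a palindrome, $M_n[F_n-\ell..F_n-1]$ is the reverse of $M_n[0..\ell-1]$, so the defining condition of $O$ becomes $\widetilde{M_n[0..\ell-1]}=M_m[0..\ell-1]$.

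The crux --- and the step I expect to be the main obstacle --- is extracting both the parity and the palindrome information from this single equation. Comparing the first and last letters of $\widetilde{M_n[0..\ell-1]}=M_m[0..\ell-1]$ and using the explicit form above forces $c_n=c_m$; thus if $n$ and $m$ have opposite parity then $O=\varnothing$, and likewise $B_n\cap B_m=\varnothing$ since a common border would have to begin with both $c_n$ and $c_m$. When $n$ and $m$ have the same parity, the prefixes $M_n[0..\ell-1]$ and $M_m[0..\ell-1]$ coincide (both equal $c$ followed by the same prefix of $\mathbf f$), so the condition collapses to ``$M_n[0..\ell-1]$ is a palindrome'', which is exactly the statement that this common prefix is a border of each of $M_n$ and $M_m$ (a border of a palindrome being precisely a palindromic prefix). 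The only length left uncovered by the range $\ell\le F_n-1$ is $\ell=F_n$, i.e.\ $M_n$ being a prefix of $M_m$; this is excluded for $n<m$ by minimality (it would make $M_n$ a proper factor of $M_m$) and is the trivial full border when $n=m$. Collecting the cases gives $O=\{|b|:\ b\in B_n\cap B_m\}$, and hence $C_{M_n,M_m}(z)=C_{M_m,M_n}(z)=\sum_{b\in B_n\cap B_m}z^{|b|-1}$.
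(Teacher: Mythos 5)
Your proposal is correct, and its skeleton is the same as the paper's: Lemma~\ref{lem3} plus minimality to kill the coefficients $C_{M_m,M_n}[k]$ with $k\le F_m-F_n$, Theorem~\ref{thm1} to identify the surviving coefficients with those of $C_{M_n,M_m}$, and Lemma~\ref{lem2} to reduce everything to suffix--prefix overlaps. Where you diverge is at the crux you yourself flagged: the identification of the overlap lengths with the lengths of common borders. The paper gets this by observing that $C_{M_n,M_m}[k']=1$ and $C_{M_m,M_n}[k]=1$ hold simultaneously and asserting that this means the length-$(F_n-k')$ prefix and suffix of \emph{both} words coincide; as stated, palindromy alone only gives (suffix of $M_n$) $=$ (prefix of $M_m$) together with (prefix of $M_n$) $=$ (suffix of $M_m$), which does not formally yield a common border without knowing more about the words. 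You instead derive the needed fact from the explicit structure $M_k=c_kp_kc_k$ with the $p_k$ nested palindromic prefixes of $\mathbf f$: equal-length prefixes of same-parity $M_k$'s coincide, opposite parity forces both the overlap set and $B_n\cap B_m$ to be empty, and the overlap condition collapses to ``the common prefix is a palindrome,'' which is exactly the border condition. So your route is essentially the paper's, but it makes explicit (via Theorem~\ref{thm4}-type structure) the one step the paper treats tersely; the cost is that your argument is specific to the Fibonacci minimal forbidden words rather than to arbitrary pairs of palindromes, but that is all the theorem requires. Your handling of the boundary length $\ell=F_n$ (excluded by minimality for $n<m$, the trivial full border for $n=m$) is also correct and matches the paper's use of Remark~\ref{Rem1}.
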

\begin{proof}
    \textbf{Case 1}, $n=m$: By Lemma \ref{lem2}, $C_{M_n,M_n}[k] = 1$ if and only if the suffix of $M_n$ is identical to the prefix of $M_n$ of length $F_n-k$. In other words,  $C_{M_n,M_n}[k] = 1$ exactly when $M_n$ has a border of length $F_n-k$.
    
    Recall that $C_{M_n,M_n}(z)$ has a term $z^{k-1}$ only if the $(F_n-1)-(k-1)^{th}$ letter of $C_{M_n,M_n}$ is 1, in which case $M_n$ has a border of length 
    \begin{align*}
        F_n - ((F_n-1)-(k-1)) = k.
    \end{align*}
    Thus for each $b\in B_n$, $C_{M_n,M_n}(z)$ has a term $z^{|b|-1}$, so
    \begin{align*}
        C_{M_n,M_n}(z) = \sum_{b\in B_n} z^{|b|-1}.
    \end{align*}
    
    \noindent\textbf{Case 2}, $n\leq m$: By Theorem \ref{thm1}, we know $C_{M_n,M_m} = C_{M_m,M_n}[F_m-F_n, F_m-F_n+1, \ldots, F_m-1]$. But since these are both \textit{minimal} forbidden Fibonacci words, each proper factor of $M_m$ will be a factor of \textbf{f}. But $M_n$ is not a factor of \textbf{f}, and therefore cannot be a factor of $M_m$. Thus by Lemma \ref{lem3}, $C_{M_m,M_n}[k]=0$ for $0\leq k\leq F_m-F_n$.
    
    Furthermore, since $|M_n|<|M_m|$, $C_{M_m,M_n}[k]=1$ if and only if the suffix of $M_m$ is identical to the prefix of $M_n$ of length $F_m-k$, with $k>F_m-F_n$. So if $k=F_m-F_n+k'$, with $k'\in[F_n]$, then this length is $F_m-(F_m-F_n+k')=F_n-k'$. Moreover, by Theorem \ref{thm1}, for each $k'\in[F_n]$, $C_{M_n,M_m}[k'] = C_{M_m,M_n}[F_m-F_n+k'] = C_{M_m,M_n}[k]$.
    
    By Lemma \ref{lem2}, $C_{M_n,M_m}[k']=1$ if and only if the suffix of $M_n$ is identical to the prefix of $M_m$ of length $F_n-k'$. So $C_{M_n,M_m}[k']=C_{M_m,M_n}[k]=1$ exactly when the prefix and the suffix of length $F_n-k'$ of both $M_n$ and $M_m$ are equal, i.e. if they share a border of length $F_n-k'$.
    
    Additionally, $C_{M_n,M_m}(z)$ has a term $z^{k-1}$ only if $C_{M_n,M_m}[(F_n-1)-(k-1)]=1$, which happens when $M_n$ and $M_m$ share a border of length 
    \begin{align*}
         F_n - ((F_n-1)-(k-1)) = k.
    \end{align*}
    But by Theorem \ref{thm1},
    \begin{align*}
        C_{M_n,M_m}[F_n-1-(k-1)] 
        &= C_{M_m,M_n}[F_m-F_n+(F_n-1-(k-1))] \\
        &= C_{M_m,M_n}[F_m-1-(k-1)] \\
        &= 1.
    \end{align*}
    So $C_{M_m,M_n}(z)$ also has a term $z^{k-1}$, thus $C_{M_n,M_m}(z) = C_{M_m,M_n}(z)$.
    
    Finally, for each $b\in B_n\cap B_m$, $C_{M_n,M_m}(z)$ has a term $z^{|b|-1}$, so
    \begin{align*}
        C_{M_n,M_m}(z) = C_{M_m,M_n}(z) = \sum_{b\in B_n\cap B_m}z^{|b|-1}.
    \end{align*}
\end{proof}

Note that by Theorem \ref{thm4}, for $1\leq n<m$, we have
\begin{align*}
    B_{2n+1}\cap B_{2m+1} &= B_{2n+1}\setminus\{M_{2n+1}\} \\
    B_{2n+2}\cap B_{2m+2} &= B_{2n+2}\setminus\{M_{2n+2}\},
\end{align*}
and for $3\leq n<m$, if $n\not\equiv m \pmod{2}$, then 
\begin{align*}
    B_n\cap B_m = \emptyset,
\end{align*}
which leads us to our final result.

\begin{thm}\label{thm6}
    Let $1\leq n<m$. Then the correlation polynomials between minimal forbidden Fibonacci words are given as follows:
    \begin{align*}
        C_{M_{2n+1},M_{2n+1}}(z) &= z^{F_{2n+1}-1} + \sum_{i=0}^{n-1}z^{F_{2(n-i)}-1} \\
        C_{M_{2n+2},M_{2n+2}}(z) &= z^{F_{2n+2}-1} + \sum_{i=0}^{n}z^{F_{2(n-i)+1}-1} \\
        C_{M_{2n+1},M_{2m+1}}(z) &= C_{M_{2m+1},M_{2n+1}}(z) = \sum_{i=0}^{n-1}z^{F_{2(n-i)}-1} \\
        C_{M_{2n+2},M_{2m+2}}(z) &= C_{M_{2m+2},M_{2n+2}}(z) = \sum_{i=0}^{n}z^{F_{2(n-i)+1}-1} \\
        C_{M_{2n+1},M_{2m+2}}(z) &= C_{M_{2m+2},M_{2n+1}}(z) = 0 \\
        C_{M_{2m+1},M_{2n+2}}(z) &= C_{M_{2n+2},M_{2m+1}}(z) = 0
    \end{align*}
\end{thm}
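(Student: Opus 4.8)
The plan is to treat Theorem~\ref{thm6} as a direct evaluation of the general formula from Theorem~\ref{thm5}. For any $3 \le n \le m$, Theorem~\ref{thm5} already gives $C_{M_n,M_m}(z) = C_{M_m,M_n}(z) = \sum_{b \in B_n \cap B_m} z^{|b|-1}$, so the symmetry of the two polynomials is automatic and it remains only to (i) read off the members of $B_n \cap B_m$ from the explicit descriptions in Theorem~\ref{thm4} and the intersection identities recorded just before the statement, and (ii) convert each border length $|b|$ into the appropriate Fibonacci index so as to produce the exponents $|b|-1$.

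The one computation everything rests on is the length of a border. Since $|p_k| = F_k - 2$, every border of the form $1p_k1$ or $0p_k0$ has length exactly $F_k$; in particular $|M_n| = F_n$, and the singleton borders satisfy $|0| = |1| = 1 = F_1 = F_2$. With this dictionary I would first handle the diagonal cases $n=m$. For the odd self-correlation, substitute $B_{2n+1} = \{1\} \cup \{M_{2n+1}\} \cup \{1p_{2(n-1-j)+2}1 : j \in [n-1]\}$ (Theorem~\ref{thm4} reindexed from $2n+3$ to $2n+1$) into the $n=m$ case of Theorem~\ref{thm5}. The border $M_{2n+1}$ contributes $z^{F_{2n+1}-1}$; the palindromic borders $1p_{2(n-1-j)+2}1$ have length $F_{2(n-j)}$ and so contribute $z^{F_{2(n-j)}-1}$ for $j = 0,\dots,n-2$; and the singleton border $1$ supplies the remaining term $z^{F_2-1} = z^0$, which is exactly the $i = n-1$ term of the target sum $\sum_{i=0}^{n-1} z^{F_{2(n-i)}-1}$. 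The even self-correlation is identical in spirit, using $B_{2n+2} = \{0\} \cup \{M_{2n+2}\} \cup \{0p_{2(n-i)+1}0 : i \in [n]\}$, with $0$ playing the role of the $i=n$ term $z^{F_1-1}$.

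For the off-diagonal same-parity cases, I would invoke the identities $B_{2n+1} \cap B_{2m+1} = B_{2n+1} \setminus \{M_{2n+1}\}$ and $B_{2n+2} \cap B_{2m+2} = B_{2n+2} \setminus \{M_{2n+2}\}$ stated before the theorem: removing the single border $M_{2n+1}$ (resp.\ $M_{2n+2}$) from the diagonal computation simply deletes the leading term $z^{F_{2n+1}-1}$ (resp.\ $z^{F_{2n+2}-1}$), which is precisely the difference between the diagonal and off-diagonal formulas. For the mixed-parity cases, every border of an odd minimal forbidden word begins with $1$ while every border of an even one begins with $0$, so $B_{2n+1} \cap B_{2m+2} = \emptyset$ (equivalently, apply the recorded fact that $B_n \cap B_m = \emptyset$ when $n \not\equiv m \pmod 2$); the sum in Theorem~\ref{thm5} is then empty and the correlation polynomial is $0$.

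The work here is entirely bookkeeping, so I expect no genuine obstacle, only the need for care in the reindexing. Theorem~\ref{thm4} is stated for $B_{2n+2}$ and $B_{2n+3}$, so applying it to $B_{2n+1}$ requires a shift, and one must check that the palindrome borders and the singleton borders together fill out the summation range $i = 0, \dots, n-1$ (odd) or $i = 0, \dots, n$ (even) with no gaps or overlaps. Confirming that the extreme singleton term lands exactly on $F_1$ or $F_2$, and that the index substitution $2(n-1-j)+2 = 2(n-j)$ is carried out correctly, is the only place an error could plausibly creep in.
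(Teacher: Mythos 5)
Your proposal is correct and follows exactly the paper's route: the paper's proof of Theorem~\ref{thm6} is simply ``Follows immediately from Theorem~\ref{thm4} and \ref{thm5},'' and your write-up is a careful expansion of that same substitution, with the reindexing $2(n-1-j)+2 = 2(n-j)$ and the identification of the singleton borders with the $F_1$, $F_2$ terms handled correctly.
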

\begin{proof}
    Follows immediately from Theorem \ref{thm4} and \ref{thm5}.
\end{proof}

\begin{ex}
For example, let $M$ be the $6 \times 6$ matrix with $(i,j)$-entry equal to $C_{M_i,M_j}$
for $1 \leq i,j \leq 6$.  Then
$$
M=
\footnotesize
\left(\begin{array}{rrrrrr}
z + 1 & 0 & 1 & 0 & 1 & 0 \\
0 & z^{2} + z + 1 & 0 & z + 1 & 0 & z + 1 \\
1 & 0 & z^{4} + z^{2} + 1 & 0 & z^{2} + 1 & 0 \\
0 & z + 1 & 0 & z^{7} + z^{4} + z + 1 & 0 & z^{4} + z + 1 \\
1 & 0 & z^{2} + 1 & 0 & z^{12} + z^{7} + z^{2} + 1 & 0 \\
0 & z + 1 & 0 & z^{4} + z + 1 & 0 & z^{20} + z^{12} + z^{4} + z + 1
\end{array}\right).
$$
\end{ex}

\end{document}